\documentclass[final]{modified-elsarticle}
\usepackage{amsmath}
\usepackage{mathrsfs}
\usepackage{amssymb}
\usepackage[amsmath,thmmarks]{ntheorem}
\usepackage{xcolor}
\usepackage{enumerate}
\usepackage{enumitem}
\setlist[enumerate,1]{label=(\arabic*).,font=\textup,
leftmargin=7mm,labelsep=1.5mm,topsep=0mm,itemsep=-0.8mm}
\setlist[enumerate,2]{label=(\alph*).,font=\textup,
leftmargin=7mm,labelsep=1.5mm,topsep=-0.8mm,itemsep=-0.8mm}

\usepackage{geometry}
\geometry{left=32mm,right=32mm,top=30mm,bottom=32mm}
\usepackage[colorlinks=true,linkcolor=blue,citecolor=blue]{hyperref}

\theoremstyle{plain}
\newtheorem{theorem}{Theorem}

\newtheorem{conjecture}[theorem]{Conjecture}

\newtheorem{lemma}[theorem]{Lemma}

\theorembodyfont{\normalfont}
\newtheorem{definition}[theorem]{Definition}
\newtheorem{remark}[theorem]{Remark}

\theoremstyle{nonumberplain}
\theoremheaderfont{\it}
\theorembodyfont{\normalfont}
\theoremsymbol{\mbox{$\Box$}}
\newtheorem{proof}{Proof\,.}

\begin{document}

\begin{frontmatter}
\title{Some results on the spectral radii of uniform hypergraphs \tnoteref{titlenote}}
\tnotetext[titlenote]{This work was supported by the Hong Kong Research Grant Council
(Grant Nos. PolyU 502111, 501212, 501913 and 15302114) and NSF of China (Grant Nos.
11101263, 11471210) and by a grant of ``The First-class Discipline of Universities in Shanghai".}

\author[address1]{Liying Kang}
%\ead{lykang@shu.edu.cn}

\author[address1]{Lele Liu}
%\ead{ahhylau@163.com}

\author[address2]{Liqun Qi}
%\ead{liqun.qi@polyu.edu.hk}

\author[address1]{Xiying Yuan\corref{correspondingauthor}}
\cortext[correspondingauthor]{Corresponding author}
\ead{xiyingyuan2007@hotmail.com}

\address[address1]{Department of Mathematics, Shanghai University, Shanghai 200444, China}
\address[address2]{Department of Applied Mathematics, The Hong Kong Polytechnic
University, HungHom, Kowloon, HongKong}

\begin{abstract}
Let $\mathcal{A}(G)$ be the adjacency tensor (hypermatrix) of uniform hypergraph $G$. The maximum
modulus of the eigenvalues of $\mathcal{A}(G)$ is called the spectral radius of $G$. In this
paper, the conjecture of Fan et al. in \cite{FanTan2015} related to compare the spectral radii of
some three uniform hypergraphs is solved. Moreover, some eigenvalues properties of a kind of uniform
hypergraphs are obtained.
\end{abstract}

\begin{keyword}
uniform hypergraph	\sep adjacency tensor \sep spectral radius \sep linear bicyclic hypergraph
\MSC[2010] 15A42 \sep  05C50
\end{keyword}
\end{frontmatter}

\section{Introduction}

Denote the set $\{1,2,\cdot\cdot\cdot,n\}$ by $[n].$ Hypergraph is a natural
generalization of ordinary graph (see \cite{Berge:Hypergraph}). A {\em hypergraph}
$G=(V(G),E(G))$ on $n$ vertices is a set of vertices, say $V(G)=\{1,2,\cdots,n\}$
and a set of edges, say $E(G)=\{e_{1},e_{2},\cdots,e_{m}\}$, where
$e_{i}=\{i_{1},i_{2},\cdots,i_{l}\},i_{j}\in\lbrack n]$, $j=1$, $2$, $\cdots$, $l$.
If $|e_{i}|=k$ for any $i=1$, $2$, $\cdots$, $m$, then $G$ is called a {\em $k$-uniform}
hypergraph. In particular, the 2-uniform hypergraphs are exactly the ordinary graphs.
For a vertex $v\in V(G)$ the {\em degree} $d_{G}(v)$ is defined as
$d_{G}(v)=|\{e_{i}:v\in e_{i}\in E(G)\}|$. Vertex with degree one is called
{\em pendent vertex} in this paper. Denote by $G-e$ a new graph (hypergraph) obtained
from $G$ by deleting the edge $e$ of $G$, and by $G+e$ a new graph (hypergraph) obtained
from $G$ by adding the edge $e$ with  $e \not \in E(G)$.

An order $k$ dimension $n$ tensor $\mathcal{T=}(\mathcal{T}_{i_{1}i_{2}\cdots
i_{k}})\in\mathbb{C}^{n\times n\times\cdots\times n}$ over the complex field
$\mathbb{C}$ is a multidimensional array with $n^{k}$ entries, where
$i_{j}\in\lbrack n]$ for each $j=1$, $2$, $\cdots$, $k$.

To study the properties of uniform hypergraphs by algebraic methods,
adjacency matrix has been generalized to adjacency tensor (hypermatrix)
in \cite{Cooper:Spectra Uniform Hypergraphs}.

\begin{definition}
[\cite{Cooper:Spectra Uniform Hypergraphs}]
Let $G=(V(G),E(G))$ be a $k$-uniform hypergraph on $n$ vertices. The adjacency
tensor of $G$ is defined as the $k$-th order $n$-dimensional tensor $\mathcal{A}(G)$
whose $(i_{1}\cdots i_{k})$-entry is
\[
(\mathcal{A}(G))_{i_{1}i_{2}\cdots i_{k}}=%
\begin{cases}
\frac{1}{(k-1)!} & \text{if}~\{i_{1},i_{2},\cdots,i_{k}\}\in E(G),\\
0 & \text{otherwise}.
\end{cases}
\]
\end{definition}

\begin{definition}
[\cite{Qi2005}]
Let $\mathcal{T}$ be an order $k$ dimension $n$ tensor, $x=(x_{1},%
\cdots,x_{n})^{T}\in\mathbb{C}^{n}$ be a column vector of dimension $n$. Then
$\mathcal{T}x^{k-1}$ is defined to be a vector in $\mathbb{C}^{n}$ whose $i$-th
component is the following
\begin{equation}
(\mathcal{T}x^{k-1})_{i}=\sum_{i_{2},\cdots,i_{k}=1}^{n}\mathcal{T}_{ii_{2}\cdots i_{k}%
}x_{i_{2}}\cdots x_{i_{k}}, \quad (i=1,\cdots,n). \label{Formular for Ax}%
\end{equation}
Let $x^{[r]}=(x_{1}^{r},\cdots,x_{n}^{r})^{T}$. Then a number $\lambda\in\mathbb{C}$
is called an {\em eigenvalue} of the tensor $\mathcal{T}$ if there exists a nonzero
vector $x\in\mathbb{C}^{n}$ such that
\begin{equation}
\label{Eigenequations}
\mathcal{T}x^{k-1}=\lambda x^{[k-1]}.
\end{equation}
and in this case, $x$ is called an {\em eigenvector} of $\mathcal{T}$ corresponding
to the eigenvalue $\lambda$.
\end{definition}

By using the general product of tensors defined in \cite{Shao2013},
$\mathcal{T}x^{k-1}$ can be simply written as $\mathcal{T}x$. In the remaining
part of this paper, we will use $\mathcal{T}x$ to denote $\mathcal{T}x^{k-1}$.

In \cite{Friedland:Perron-Frobenius Theorems}, the weak irreducibility of nonnegative
tensors was defined. It was proved in \cite{Friedland:Perron-Frobenius Theorems} and
\cite{Yang:Nonegative Weakly Irreducible Tensors} that a $k$-uniform hypergraph $G$
is connected if and only if its adjacency tensor $\mathcal{A}(G)$ is weakly irreducible.

The spectral radius of $\mathcal{T}$ is defined as
$\rho(\mathcal{T})=\max\{|\lambda|:\lambda~\text{is an eigenvalue of}~\mathcal{T}\}$.
Part of Perron-Frobenius theorem for nonnegative tensors is stated in the following
for reference.

\begin{theorem}
[\cite{K.C.Chang.etc:Perron-Frobenius Theorem},\cite{Yang:Nonegative Weakly Irreducible Tensors}]
\label{Perron-Frobenius}
Let $\mathcal{T}$ be a nonnegative tensor. Then we have the following statements.

\begin{enumerate}
\item $\rho(\mathcal{T})$ is an eigenvalue of $\mathcal{T}$ with a nonnegative
eigenvector $x$ corresponding to it.

\item If $\mathcal{T}$ is weakly irreducible, then $x$ is positive, and for
any eigenvalue $\mu$ with nonnegative eigenvector, $\mu=\rho(\mathcal{T})$ holding.

\item The nonnegative eigenvector $x$ corresponding to $\rho(\mathcal{T})$ is unique
up to a constant multiple.
\end{enumerate}
\end{theorem}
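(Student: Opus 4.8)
The plan is to prove the three assertions in order, the central difficulty being that the eigen-equation $\mathcal{T}x=\lambda x^{[k-1]}$ is homogeneous of degree $k-1$ rather than linear, so the characteristic-polynomial and spectral-decomposition tools behind the classical matrix Perron--Frobenius theorem are unavailable. Their place is taken by a fixed-point argument for existence, by the Collatz--Wielandt quantities
\[
r(x)=\min_{i:\,x_i>0}\frac{(\mathcal{T}x)_i}{x_i^{k-1}},\qquad R(x)=\max_{i}\frac{(\mathcal{T}x)_i}{x_i^{k-1}}\quad(x>0),
\]
which I would show satisfy $r(x)\le\rho(\mathcal{T})\le R(x)$ together with $\rho(\mathcal{T})=\sup_{x>0}r(x)=\inf_{x>0}R(x)$, and by the order-preserving, positively homogeneous (degree one) Perron map $\Phi(x)=(\mathcal{T}x)^{[1/(k-1)]}$.

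For (1) I would first treat a strictly positive tensor. On the standard simplex $\Delta=\{x\ge 0:\sum_i x_i=1\}$ the map
\[
f(x)_i=\frac{\bigl((\mathcal{T}x)_i\bigr)^{1/(k-1)}}{\sum_{j=1}^{n}\bigl((\mathcal{T}x)_j\bigr)^{1/(k-1)}}
\]
is continuous and carries $\Delta$ into its relative interior, so Brouwer's theorem yields a fixed point $x^\ast>0$; unwinding $f(x^\ast)=x^\ast$ gives $\mathcal{T}x^\ast=\lambda(x^\ast)^{[k-1]}$ with $\lambda>0$, and since $x^\ast>0$ makes $r(x^\ast)=R(x^\ast)=\lambda$, the sandwich forces $\lambda=\rho(\mathcal{T})$. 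For a general nonnegative $\mathcal{T}$ I would perturb to $\mathcal{T}_\varepsilon=\mathcal{T}+\varepsilon\mathcal{E}$ ($\mathcal{E}$ the all-ones tensor), take the resulting positive eigenpairs $(\rho(\mathcal{T}_\varepsilon),x_\varepsilon)$ on $\Delta$, and pass to a limit as $\varepsilon\downarrow 0$: compactness of $\Delta$ gives $x_\varepsilon\to x\ge 0$ with $x\ne 0$, continuity gives $\mathcal{T}x=\rho' x^{[k-1]}$, and monotonicity of the spectral radius in the entries forces $\rho'=\rho(\mathcal{T})$.

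For (2) the uniqueness clause is the quick half: if $\mathcal{T}y=\mu y^{[k-1]}$ with $y>0$, then $(\mathcal{T}y)_i/y_i^{k-1}=\mu$ for every $i$, so $r(y)=R(y)=\mu$ and the Collatz--Wielandt sandwich gives $\mu=\rho(\mathcal{T})$; thus $\rho(\mathcal{T})$ is the only eigenvalue admitting a positive eigenvector. The positivity of the Perron eigenvector $x$ is the substantive part. Assuming weak irreducibility and supposing the support $N=\{i:x_i>0\}$ were a proper nonempty subset, I would restrict the eigen-equation to $N$: every term of $(\mathcal{T}x)_i$, $i\in N$, involving an index outside $N$ vanishes, so $x|_N>0$ is a positive eigenvector of the principal subtensor $\mathcal{T}[N]$ with eigenvalue $\rho(\mathcal{T})$, whence $\rho(\mathcal{T})=\rho(\mathcal{T}[N])$ by the clause just proved. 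But weak irreducibility means the representation digraph of $\mathcal{T}$ is strongly connected, so there is an arc across the cut $(N,[n]\setminus N)$; this makes the embedded copy of $\mathcal{T}[N]$ (extended by zeros) a nonnegative tensor that is dominated by, but not equal to, the weakly irreducible $\mathcal{T}$, and strict monotonicity of the spectral radius then gives $\rho(\mathcal{T}[N])<\rho(\mathcal{T})$, a contradiction. Hence $N=[n]$ and $x>0$.

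For (3) I would argue uniqueness of the positive eigenvector up to scaling directly from the order structure, since linear-combination arguments are unavailable. Writing $\sigma=\rho(\mathcal{T})^{1/(k-1)}$, two positive eigenvectors $x,y$ satisfy $\Phi(x)=\sigma x$ and $\Phi(y)=\sigma y$; setting $t^\ast=\min_i x_i/y_i$ gives $t^\ast y\le x$ with equality in some coordinate. Because weak irreducibility (strong connectivity of the digraph) makes a suitable iterate $\Phi^m$ strictly monotone---$u\le v,\ u\ne v\Rightarrow\Phi^m(u)<\Phi^m(v)$ coordinatewise---applying $\Phi^m$ to $t^\ast y\le x$ would yield $t^\ast y<x$ in every coordinate unless $t^\ast y=x$, contradicting the equality coordinate; hence $x=t^\ast y$. (Equivalently, one may invoke the Birkhoff--Hopf contraction of $\Phi$ in Hilbert's projective metric.) I expect the main obstacle to be exactly this nonlinearity: the strict-monotonicity upgrade of $\Phi$ supplied by weak irreducibility is what replaces, and is more delicate than, the linear-independence and irreducible-nonnegative-matrix facts used in the classical proof, and it is the common engine behind both the positivity in (2) and the uniqueness in (3).
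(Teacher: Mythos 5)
The paper does not prove this theorem: it is quoted as background from the cited references of Chang--Pearson--Zhang and Yang--Yang, so there is no in-paper argument to compare yours against; I can only assess your sketch on its own terms. Its architecture (Brouwer fixed point plus perturbation for existence, Collatz--Wielandt quantities, support analysis for positivity, order-theoretic uniqueness) is the standard one from those sources, and parts (1) and (3) are essentially sound. In (1) you only need the easy half of the sandwich, $\rho(\mathcal{T})\leqslant R(x)$ for $x>0$ (compare $|z|\leqslant tx$ at a maximizing coordinate for any eigenpair $(\lambda,z)$), since the fixed point already produces an eigenvalue and hence something $\leqslant\rho(\mathcal{T})$. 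In (3) your stated lemma that some iterate $\Phi^m$ is strictly monotone under weak irreducibility is false as a general claim --- an irreducible permutation matrix is already a counterexample at $k=2$ --- but the argument survives because you do not need that generality: with $x,y>0$ both eigenvectors, $t^{\ast}y\leqslant x$ with equality at $i_0$ and strict inequality at $j_0$, strictness propagates backwards along a directed path from $i_0$ to $j_0$ in the representation digraph one eigen-equation at a time (all factors are positive, so one strictly smaller factor makes the whole coordinate strictly smaller), and strong connectivity supplies the path.

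The genuine gap is in (2). The cut argument you sketch is the one that works for irreducibility in the Chang--Pearson--Zhang sense, not for weak irreducibility: for $i\notin N$ the vanishing of $(\mathcal{T}x)_i$ only tells you that every nonzero entry $\mathcal{T}_{ii_2\cdots i_k}$ has \emph{some} index outside $N$, while weak irreducibility only guarantees an arc $(i,j)$ with $j\in N$, i.e.\ a nonzero entry with \emph{one} index in $N$; the remaining indices may lie outside $N$ and annihilate that term, so no contradiction arises. This is precisely why Friedland--Gaubert--Han prove positivity by a different route (the associated irreducible matrix $M(\mathcal{T})_{ij}=\sum\mathcal{T}_{ii_2\cdots i_k}$ and nonlinear Perron--Frobenius theory). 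Moreover, the patch you propose --- strict monotonicity of the spectral radius under weak irreducibility --- is, in the literature you are reconstructing, itself proved \emph{using} the positive Perron vector whose existence you are trying to establish, so your argument as arranged is circular. Note also that clause (2) of the statement concerns eigenvalues with merely nonnegative eigenvectors, so your ``quick half'' (which assumes a positive eigenvector) also funnels through the missing positivity step. To close the gap you need an independent proof either of positivity or of strict monotonicity that does not presuppose it.
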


For weakly irreducible nonnegative $\mathcal{T}$ of order $k$, the positive
eigenvector $x$ with $||x||_{k}=1$ corresponding to $\rho(\mathcal{T})$ is
called the {\em principal eigenvector} of $\mathcal{T}$ in this paper.

\section{Comparison of spectral radii of $B_{m}^{L}(1)$, $B_{m}^{L}(2)$ and $B_{m}^{P}$}

For any two edges $e_{i}$ and $e_{j}$ of hypergraph $G$, if
$|e_{i}\cap e_{j}|\leqslant1$, $i\neq j$, then $G$ is called a
{\em linear hypergraph} (see \cite{Bretto}). Let $G$ be a
connected $k$-uniform hypergraph with $n$ vertices and $m$
edges. Then $G$ is called a {\em bicyclic hypergraph} if
$m(k-1)-n=1$ holding (see \cite{FanTan2015}).

\begin{figure}
\begin{minipage}{0.33\textwidth}
\centering
\includegraphics[scale=0.7]{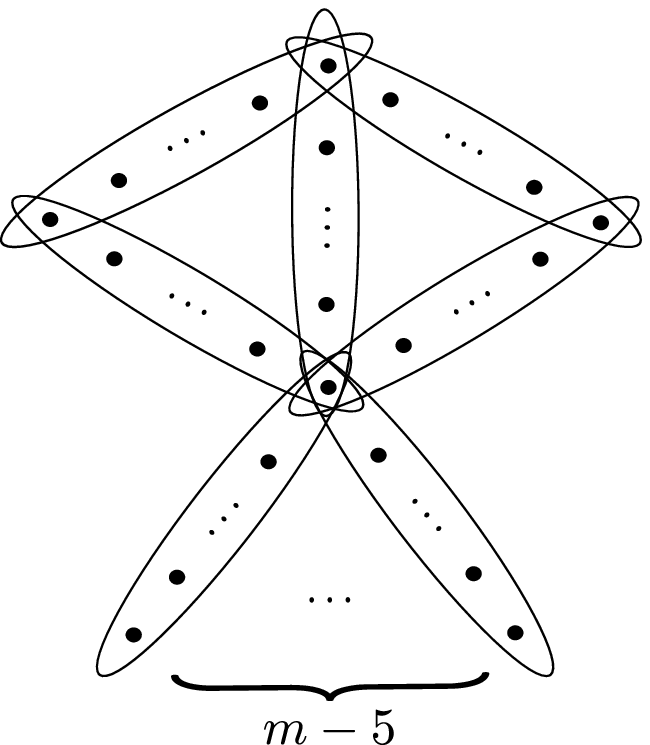}\\[1mm]
(1)
\end{minipage}
\begin{minipage}{0.33\textwidth}
\centering
\includegraphics[scale=0.7]{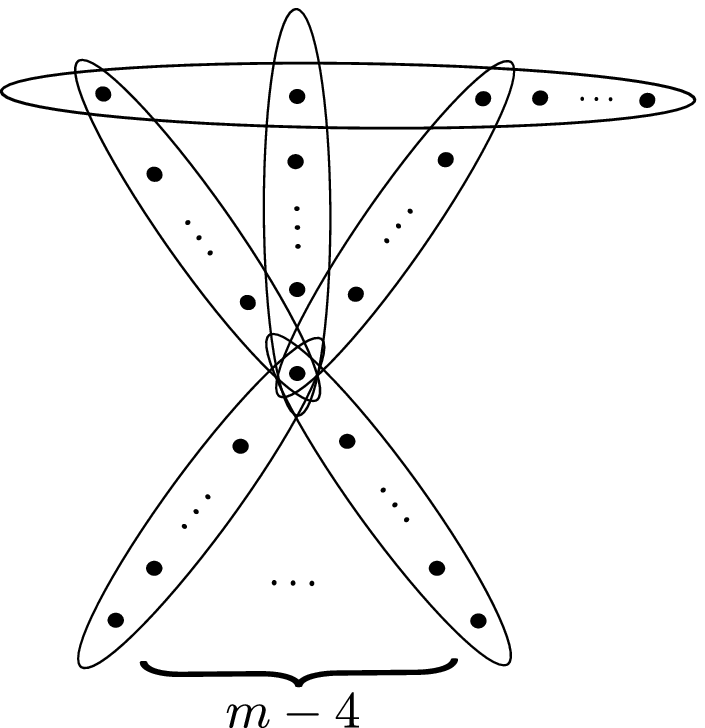}\\[1mm]
(2)
\end{minipage}
\begin{minipage}{0.33\textwidth}
\centering
\includegraphics[scale=0.8]{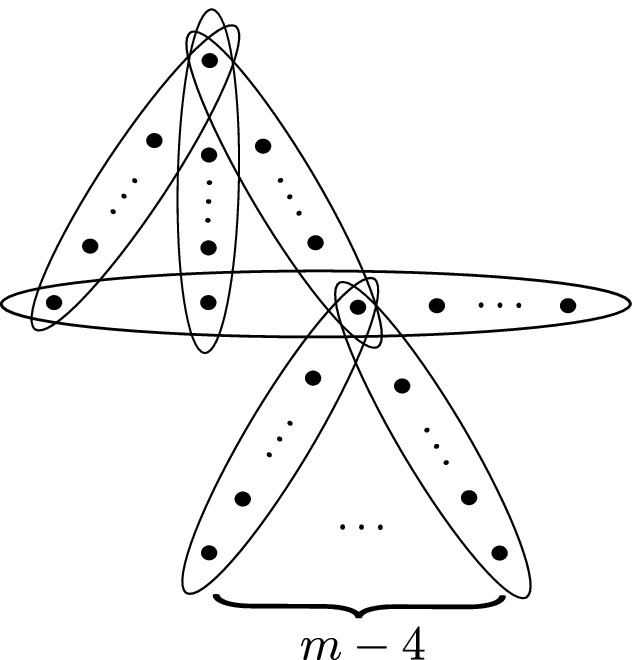}\\[1mm]
(3)
\end{minipage}
\caption{(1) $B_m^P$~~~(2) $B_m^L(1)$~~~(3) $B_m^L(2)$}
\label{Fig:B_m^P}
\end{figure}

Let $B_{m}^{L}(1)$, $B_{m}^{L}(2)$ and $B_{m}^{P}$ be $k$-uniform hypergraphs with
$m$ edges as shown in Figure \ref{Fig:B_m^P}. Theorem 3.9 of \cite{FanTan2015} stated
that among all the linear bicyclic uniform hypergraphs with $m\geqslant5$ edges, the
hypergraph maximizing the spectral radius is among one the three hypergraphs:
$B_{m}^{L}(1)$, $B_{m}^{L}(2)$ and $B_{m}^{P}$. For further information the
following conjecture was presented.

\begin{conjecture}
[\cite{FanTan2015}]
\label{conjecture}
For $m\geqslant5$, $\rho\left(B_{m}^{L}(1)\right) >\rho\left(B_{m}^{L}(2)\right)
>\rho\left( B_{m}^{P}\right)$.
\end{conjecture}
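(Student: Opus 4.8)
The plan is to work throughout with the positive principal eigenvectors supplied by Theorem \ref{Perron-Frobenius}. Since each of $B_m^L(1)$, $B_m^L(2)$, $B_m^P$ is connected, its adjacency tensor is weakly irreducible, hence has a positive principal eigenvector $x$ with $\|x\|_k=1$, and the spectral radius is recovered as the value of the homogeneous form
\[
\rho(\mathcal A(G))=\mathcal A(G)x^k=k\sum_{e\in E(G)}\prod_{i\in e}x_i ,
\]
which is moreover the maximum of this form over the nonnegative unit sphere (the standard variational characterization for symmetric nonnegative tensors). The componentwise eigenequations $\rho\,x_v^{k-1}=\sum_{e\ni v}\prod_{i\in e\setminus\{v\}}x_i$ are the basic computational device. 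I would first exploit the automorphisms of each hypergraph: the pendant edges hanging at a common vertex, together with the symmetric halves of the bicyclic core, force the principal eigenvector to be constant on each orbit of vertices. This collapses $x$ to a handful of unknown values attached to the distinguished vertices of the core, so that the eigenequations become a small finite system essentially independent of $m$, with $m$ entering only through the number of equal pendant contributions at the attachment vertex.

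For each of the two inequalities I would combine eigenvector transplantation with the variational bound. The three hypergraphs share a common sub-hypergraph and differ only inside a bounded region of the core; fix an identification of the vertices outside that region, and write $E_1,E_2$ for the edge sets of $B_m^L(1),B_m^L(2)$. To prove $\rho(B_m^L(1))>\rho(B_m^L(2))$, let $x$ be the principal eigenvector of $B_m^L(2)$ and evaluate the form of $B_m^L(1)$ at $x$. Since $\rho(B_m^L(1))\geq\mathcal A(B_m^L(1))x^k$ always holds, it suffices to establish
\[
\mathcal A(B_m^L(1))x^k-\mathcal A(B_m^L(2))x^k
=k\Bigl(\sum_{e\in E_1\setminus E_2}\prod_{i\in e}x_i-\sum_{e\in E_2\setminus E_1}\prod_{i\in e}x_i\Bigr)>0 ,
\]
which reduces the whole inequality to a comparison of a few products of eigenvector entries over the edges that are moved. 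The same scheme, applied to the (different) local difference between $B_m^L(2)$ and $B_m^P$, reduces the second inequality $\rho(B_m^L(2))>\rho(B_m^P)$ to an analogous sign condition.

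The heart of the argument, and the step I expect to be the main obstacle, is establishing these sign conditions, i.e.\ the correct ordering of the principal eigenvector entries at the core vertices (intuitively, that the more ``central'' vertices carry strictly larger weight than the peripheral ones). These inequalities are not monotone in any obvious way and must be extracted from the reduced eigenequation system: I would solve the small system for the ratios of the distinguished entries, express each ratio as a function of $\rho$ and of the pendant count, and then compare. Because the number of pendant edges grows with $m$, the delicate point is to show that the desired ordering persists for \emph{every} $m\geq5$; this is precisely where the hypothesis $m\geq5$ should enter, the comparison failing or degenerating for smaller $m$. Strictness throughout follows from the positivity of the principal eigenvector together with the strict positivity of the reduced difference.

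Finally, should the direct sign analysis prove intractable because the eigenequation ratios are awkward, I would fall back on two auxiliary devices. First, a clean edge-moving lemma (moving a bundle of edges from a vertex $u$ to a vertex $v$ strictly increases $\rho$ whenever $x_v\geq x_u$, itself proved by the same variational bound) to package the transplantation step and control signs qualitatively. Second, the weighted-incidence-matrix method, which converts each $\rho(B)$ into a consistency equation for a single scalar parameter and often makes the comparison of two such equations more transparent than manipulating the eigenvector coordinates directly.
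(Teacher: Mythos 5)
Your outline for the second inequality, $\rho\bigl(B_m^L(2)\bigr)>\rho\bigl(B_m^P\bigr)$, is essentially the paper's proof: one takes the principal eigenvector $x$ of $B_m^P$, uses the symmetries to identify entries, writes $B_m^L(2)=B_m^P-e_1-e_2+e_1'+e_2'$ for an explicit two-edge swap, and the Rayleigh-quotient difference collapses to $x_v(x_{a_1})^{k-3}(x_a-x_{a_1})^2>0$, with the single eigenvector inequality $x_a>x_{a_1}$ extracted from the eigenequations exactly as you propose. That half of your plan is sound and completable.

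The genuine gap is in the first inequality, $\rho\bigl(B_m^L(1)\bigr)>\rho\bigl(B_m^L(2)\bigr)$, where you propose the same transplantation scheme as the primary route. There the edge difference between the two hypergraphs is not a swap that produces a sign-definite (e.g.\ perfect-square) expression, and you supply no candidate ordering of the eigenvector entries that would make $\sum_{e\in E_1\setminus E_2}\prod_{i\in e}x_i-\sum_{e\in E_2\setminus E_1}\prod_{i\in e}x_i>0$ at the principal eigenvector of $B_m^L(2)$; you explicitly defer this as ``the main obstacle,'' which is precisely the content of the theorem. The paper does not attempt this route at all. It instead carries out your ``fallback'': it defines $\alpha$ by the unique root in $(0,1)$ of $f(x)=(m-4)x^4-(m-1)x^3-x+1$ (with $x=\alpha^{1/3}$), proves this root is strictly decreasing in $m$ so that $\alpha^{1/3}\gtrless\tfrac12$ according as $m\lessgtr 6$, exhibits an explicit weighted incidence matrix making $B_m^L(1)$ consistently $\alpha$-normal (hence $\rho=\alpha^{-1/k}$), and then exhibits matrices making $B_m^L(2)$ strictly $\alpha$-subnormal for $m=5$ and $m\geqslant7$, with a separate inconsistency argument for the borderline case $m=6$ where $B_6^L(2)$ is $\alpha$-normal but not consistently so. None of this construction, the choice of $f$, the monotonicity argument, or the three-way case split is present or sketched in your proposal, so as written the first inequality remains unproved.
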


A novel method, weighted incidence matrix method is introduced by Lu and Man
in \cite{LuAndMan:Small Spectral Radius}. It should be announced that spectral
radius defined in \cite{LuAndMan:Small Spectral Radius} differ from this paper,
while for a $k$-uniform hypergraph $G$ the spectral radius defined in
\cite{LuAndMan:Small Spectral Radius} equals to $(k-1)!\rho(G)$, and then it
does not effect the result. We will use this method to prove
$\rho\left(B_{m}^{L}(1)\right) >\rho\left(B_{m}^{L}(2)\right)$.

\begin{definition}
[\cite{LuAndMan:Small Spectral Radius}]
\label{defn:WeightedIncidenceMatrix}
A {\em weighted incidence matrix} $B$ of a hypergraph $H=(V(H),E(H))$ is a
$|V(H)|\times |E(H)|$ matrix such that for any vertex $v$ and any edge $e$,
the entry $B(v,e)>0$ if $v\in e$ and $B(v,e)=0$ if $v\notin e$.
\end{definition}

\begin{definition}
[\cite{LuAndMan:Small Spectral Radius}]~
\begin{enumerate}
\item A hypergraph $H$ is called {\em $\alpha$-normal} if there exists a
weighted incidence matrix $B$ satisfying \par

(i). $\sum_{e:v\in e}B(v,e)=1$, for any $v\in V(H)$; \par
(ii). $\prod_{v:v\in e}B(v,e)=\alpha$, for any $e\in E(H)$. \par
Moreover, the weighted incidence matrix $B$ is called {\em consistent} if
for any cycle $v_{0}e_{1}v_{1}e_{2}\cdots v_{\ell}$ $(v_{\ell}=v_{0})$
\[
\prod^{\ell}_{i=1}\frac{B(v_{i},e_{i})}{B(v_{i-1},e_{i})}=1.
\]
In this case, $H$ is called {\em consistently $\alpha$-normal}.

\item A hypergraph $H$ is called {\em $\alpha$-subnormal} if there exists a
weighted incidence matrix $B$ satisfying \par

(i). $\sum_{e:v\in e}B(v,e)\leqslant 1$, for any $v\in V(H)$; \par
(ii). $\prod_{v: v\in e}B(v,e)\geqslant\alpha$, for any $e\in E(H)$.\par
Moreover, $H$ is called {\em strictly $\alpha$-subnormal} if it is
$\alpha$-subnormal but not $\alpha$-normal.
\end{enumerate}
\end{definition}

\begin{lemma}
[\cite{LuAndMan:Small Spectral Radius}]
\label{lem:StrictlySubnormal}
Let $H$ be a connected $k$-uniform hypergraph.
\begin{enumerate}
\item $H$ is consistently $\alpha$-normal if and only if $\rho(H)=\alpha^{-\frac{1}{k}}$.\label{item1}
\item If $H$ is $\alpha$-subnormal, then $\rho(H)\leqslant\alpha^{-\frac{1}{k}}$.
Moreover, if $H$ is strictly $\alpha$-subnormal, then $\rho(H)<\alpha^{-\frac{1}{k}}$.
\end{enumerate}
\end{lemma}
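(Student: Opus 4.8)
The plan rests on a dictionary between the principal eigenvector of $\mathcal{A}(H)$ and weighted incidence matrices. Given a positive vector $x$ and a scalar $\lambda>0$, I would set
\[
B(v,e)=\frac{\prod_{u\in e,\,u\neq v}x_u}{\lambda\,x_v^{k-1}}\qquad(v\in e),
\]
and extract three facts by direct computation. First, $\sum_{e\ni v}B(v,e)=1$ is exactly the eigenequation $(\mathcal{A}(H)x)_v=\lambda x_v^{k-1}$ rewritten, so normality condition (i) is equivalent to $x$ being an eigenvector for $\lambda$. Second, since each vertex weight occurs $k-1$ times in the numerator, $\prod_{v\in e}B(v,e)=\lambda^{-k}$ for every edge $e$, so condition (ii) forces $\alpha=\lambda^{-k}$, i.e. $\lambda=\alpha^{-1/k}$. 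Third, for adjacent $v,w$ in a common edge $e$ one gets $B(v,e)/B(w,e)=(x_w/x_v)^{k}$, so around any cycle $v_0e_1v_1\cdots v_\ell$ the product $\prod_i B(v_i,e_i)/B(v_{i-1},e_i)$ telescopes to $(x_{v_0}/x_{v_\ell})^{k}=1$; thus a matrix built from an eigenvector is automatically consistent. This computation drives both parts.

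For part (1), the implication $\rho(H)=\alpha^{-1/k}\Rightarrow$ consistently $\alpha$-normal is then immediate: take the positive principal eigenvector $x$ supplied by Theorem \ref{Perron-Frobenius} (connectedness makes $\mathcal{A}(H)$ weakly irreducible), put $\lambda=\rho(H)$, and read off consistency and both normality conditions from the three facts above. For the converse I would reverse the construction. Starting from a consistent $\alpha$-normal $B$, the relation $x_v^{k}/x_w^{k}=B(w,e)/B(v,e)$ prescribes all ratios of the sought vector along edges; fixing a root value and propagating these ratios along paths defines a positive $x$, and consistency is precisely what makes this independent of the chosen path. One then checks that $\lambda_{v,e}:=\prod_{u\neq v}x_u/\big(B(v,e)x_v^{k-1}\big)$ is independent of $v$ within each edge (via the ratio relation) and equals $\alpha^{-1/k}$ on every edge (via condition (ii)); condition (i) now reads as the eigenequation, so $x$ is a positive eigenvector for $\alpha^{-1/k}$, and the second part of Theorem \ref{Perron-Frobenius} identifies this eigenvalue with $\rho(H)$.

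For part (2) I would argue by a weighted AM--GM estimate rather than by constructing an eigenvector. Let $x>0$ be the principal eigenvector and $\rho=\rho(H)$. Summing condition (i) against the weights $x_v^{k}$ gives
\[
\sum_{v}x_v^{k}\;\geq\;\sum_{v}x_v^{k}\!\!\sum_{e\ni v}B(v,e)\;=\;\sum_{e}\sum_{v\in e}B(v,e)\,x_v^{k}.
\]
Applying AM--GM to each edge sum and then invoking condition (ii) yields $\sum_{v\in e}B(v,e)x_v^{k}\geq k\big(\prod_{v\in e}B(v,e)\big)^{1/k}\prod_{v\in e}x_v\geq k\,\alpha^{1/k}\prod_{v\in e}x_v$. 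Separately, multiplying the eigenequation by $x_v$ and summing over $v$ gives the identity $\sum_{e}\prod_{v\in e}x_v=\tfrac{\rho}{k}\sum_v x_v^{k}$. Chaining these produces $\sum_v x_v^{k}\geq \alpha^{1/k}\rho\sum_v x_v^{k}$, and since $\sum_v x_v^{k}>0$ we conclude $\rho\leq\alpha^{-1/k}$.

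The strict version follows by tracking equality. If $\rho=\alpha^{-1/k}$, every inequality above is tight: positivity of $x$ forces $\sum_{e\ni v}B(v,e)=1$ for all $v$, while tightness of AM--GM together with condition (ii) forces $\prod_{v\in e}B(v,e)=\alpha$ for all $e$. These are exactly the defining equalities of $\alpha$-normality, so the witness $B$ would make $H$ $\alpha$-normal, contradicting strict subnormality; hence $\rho<\alpha^{-1/k}$. The one genuinely delicate point is the converse half of part (1): the local ratios read off from $B$ determine $x$ only up to path choices, and it is the consistency condition — a discrete integrability requirement on $\log B(v,e)$ around cycles — that guarantees these choices cohere into a single-valued positive eigenvector. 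Everything else is bookkeeping with the eigenequation and AM--GM.
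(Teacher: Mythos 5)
The paper does not prove this lemma --- it is imported verbatim from Lu and Man \cite{LuAndMan:Small Spectral Radius} --- so there is no in-paper argument to compare against. Your proof is correct and is essentially the standard Lu--Man argument: the dictionary $B(v,e)=x^e/(\lambda x_v^k)$ between positive eigenvectors and consistent $\alpha$-normal matrices (with consistency serving as the integrability condition for reconstructing $x$ from $B$), and the AM--GM estimate with equality tracking for the (strictly) subnormal bounds; all computations check out under the paper's normalization of the adjacency tensor.
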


\begin{theorem}
If $m\geqslant 5$, then $\rho\left(B_{m}^{L}(1)\right)>\rho\left(B_{m}^{L}(2)\right)$.
\end{theorem}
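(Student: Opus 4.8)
The plan is to apply Lemma~\ref{lem:StrictlySubnormal} in the standard two-step fashion for comparing spectral radii via weighted incidence matrices. First I would exhibit a \emph{consistently} $\alpha$-normal weighted incidence matrix for $B_m^L(1)$, so that by the first part of Lemma~\ref{lem:StrictlySubnormal} one gets the exact value $\rho\!\left(B_m^L(1)\right)=\alpha^{-1/k}$ for a specific $\alpha=\alpha(m,k)$. Then, keeping \emph{the same} $\alpha$, I would construct a weighted incidence matrix on $B_m^L(2)$ witnessing that $B_m^L(2)$ is \emph{strictly} $\alpha$-subnormal. The second part of Lemma~\ref{lem:StrictlySubnormal} then yields $\rho\!\left(B_m^L(2)\right)<\alpha^{-1/k}=\rho\!\left(B_m^L(1)\right)$, which is exactly the claim. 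The reason $B_m^L(1)$ is taken as the normal reference (rather than $B_m^L(2)$) is that normality pins down the larger radius exactly, and subnormality with the same $\alpha$ bounds the smaller one strictly below it.

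For the first step I would build the $\alpha$-normal matrix $B$ on $B_m^L(1)$ by exploiting its structure. Along each pendant path the two normality conditions, namely $\sum_{e:\,v\in e}B(v,e)=1$ at every vertex and $\prod_{v:\,v\in e}B(v,e)=\alpha$ on every edge, collapse to a one-parameter recursion in the successive weights, which can be solved in closed form down the path. At the bicyclic core (the two cycles) the conditions reduce to a small symmetric system whose solution, together with the path weights, determines $\alpha$; the symmetry of the core lets me verify the consistency condition $\prod_{i=1}^{\ell}\frac{B(v_i,e_i)}{B(v_{i-1},e_i)}=1$ around each of the two cycles directly, so that $B$ is consistently $\alpha$-normal and not merely $\alpha$-normal. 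This gives $\rho\!\left(B_m^L(1)\right)=\alpha^{-1/k}$.

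The decisive second step is the transfer to $B_m^L(2)$. Since $B_m^L(2)$ differs from $B_m^L(1)$ only by a local relocation (the two hypergraphs agree outside a small neighborhood of the vertex where the cyclic/pendant structure is reattached), I would copy the weights of $B$ on the common part and redefine weights only on the edges meeting the relocation vertex. These new weights must be chosen so that the vertex-sum condition is preserved as $\leqslant 1$ while the edge-product condition is preserved as $\geqslant\alpha$, with at least one inequality strict so that the resulting matrix makes $B_m^L(2)$ strictly $\alpha$-subnormal. I expect this to rest on an AM--GM / convexity estimate: redistributing a fixed amount of incidence weight from the configuration realized in $B_m^L(1)$ toward the configuration forced by $B_m^L(2)$ lowers the relevant edge products, so they can be raised back to exactly $\alpha$ while keeping the vertex sums strictly below $1$, leaving genuine slack. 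The hypothesis $m\geqslant 5$ should enter precisely here, guaranteeing the core is large enough that this slack is strict (for smaller $m$ the core degenerates and the comparison can fail).

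The main obstacle is this transition step: one must write down explicit feasible weights in the relocation neighborhood and verify both families of inequalities simultaneously, which is a careful but finite computation (solving the core system and checking the handful of boundary products and sums). Once a strictly $\alpha$-subnormal matrix for $B_m^L(2)$ is produced with the same $\alpha$ obtained in the first step, Lemma~\ref{lem:StrictlySubnormal} delivers $\rho\!\left(B_m^L(2)\right)<\rho\!\left(B_m^L(1)\right)$ at once.
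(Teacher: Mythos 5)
Your overall strategy is exactly the paper's: exhibit a consistently $\alpha$-normal weighted incidence matrix for $B_m^L(1)$ to pin down $\rho\left(B_m^L(1)\right)=\alpha^{-1/k}$ via part (1) of Lemma~\ref{lem:StrictlySubnormal}, then show $B_m^L(2)$ is $\alpha$-subnormal for the \emph{same} $\alpha$ and conclude by part (2). However, the decisive second step is where your plan has a genuine gap, and it is not merely a matter of deferred computation. Your claim that one can always redistribute the weights near the relocation vertex so as to leave ``genuine slack'' (a strict inequality in the subnormality conditions) is false at $m=6$: with the value of $\alpha$ forced by $B_m^L(1)$ (the root of $(m-4)x^4-(m-1)x^3-x+1$ in $(0,1)$, satisfying $\alpha^{1/3}=\tfrac12$ exactly when $m=6$), the natural weighting of $B_m^L(2)$ achieves \emph{equality} in every vertex sum and every edge product, i.e.\ $B_m^L(2)$ is $\alpha$-normal, not strictly $\alpha$-subnormal. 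The paper must then obtain strictness by a different mechanism: it verifies that the witnessing matrix fails the cycle-consistency condition, so $B_m^L(2)$ is not \emph{consistently} $\alpha$-normal, and part (1) of Lemma~\ref{lem:StrictlySubnormal} (as an equivalence) rules out $\rho\left(B_m^L(2)\right)=\alpha^{-1/k}$. Your proposal has no provision for this case and would stall there.

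A second, related gap: whether the edge-product surplus on the modified edge is positive depends on the sign of expressions such as $2\alpha^{1/3}-1$ and $4\alpha^{2/3}-2\alpha^{1/3}-1$, i.e.\ on whether $\alpha^{1/3}$ lies above or below $\tfrac12$. Establishing this requires proving that $\alpha^{1/3}$ is monotonically decreasing in $m$ (the paper does this by implicit differentiation of the defining quartic), which splits the argument into the cases $m=5$, $m=6$, and $m\geqslant 7$ with \emph{different} weight assignments in the first two. Your heuristic that $m\geqslant5$ ``guarantees the slack is strict'' does not reflect this; the hypothesis $m\geqslant 5$ enters only to make the configurations well defined and the quartic have its root in $(0,1)$, while the strictness analysis genuinely trifurcates. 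Until the explicit weights are written down and these sign and consistency checks are carried out, the proof is not complete.
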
	

\begin{proof}
Let
\[
f(x)=(m-4)x^4-(m-1)x^3-x+1.
\]

It is claimed that there exists a unique zero of $f(x)$ in interval $(0,1)$
whenever $m\geqslant 5$. In fact, it is clear that $f(0)=1>0$, $f(1)=-3<0$
and $f(+\infty)=+\infty$, therefore $f(x)$ has at least two real zeros located
in $(0,1)$ and $(1,+\infty)$. Suppose that $f(x)$ has four real zeros $x_1$,
$x_2$, $x_3$, $x_4$. Notice that $f(x)>0$ whenever $x\leqslant 0$, it follows
that all the real zeros of $f(x)$ are located in $(0,+\infty)$. Hence $x_i>0$,
$i=1$, $2$, $3$, $4$. However, according to Vieta's formulas we have
\[
\sum_{1\leqslant i<j\leqslant 4}x_ix_j=0,
\]
which is a contradiction with $x_i>0$, $i=1$, $2$, $3$, $4$. Therefore $f(x)$ has
only two real zeros located in $(0,1)$ and $(1,+\infty)$, respectively. Thus
there exists a unique zero of $f(x)$ in interval $(0,1)$.

Suppose that $\alpha^{\frac13}$ is the zero of $f(x)$ in interval $(0,1)$, i.e.,
\begin{equation}
\label{eq:alpha}
(m-4)\alpha^{\frac43}-(m-1)\alpha-\alpha^{\frac13}+1=0.
\end{equation}
In what follows, we first prove that $\alpha^{\frac13}$ is monotonically decreasing
on $m$. For convenience, we denote by $y=\alpha^{\frac13}$, i.e.,
\begin{equation}
\label{eq:y}
(m-4)y^4-(m-1)y^3-y+1=0.
\end{equation}
Take the derivative of both sides of \eqref{eq:y} on $m$, we have
\begin{equation}
\label{eq:y'}
[4(m-4)y^3-3(m-1)y^2-1]\cdot y'=y^3-y^4.
\end{equation}
From \eqref{eq:y}, we obtain
\[
m=\frac{4y^4-y^3+y-1}{y^4-y^3}.
\]
Substituting this into \eqref{eq:y'}, we see
\[
y'=-\frac{y^4(y-1)^2}{3[y^4+(y-1)^2]}<0,
\]
which implies that $\alpha^{\frac13}$ is monotonically decreasing on $m$. Hence
\begin{equation}
\label{eq:alpha>=<1/2}
\begin{cases}
\alpha^{\frac13}>\frac12 & \text{if}~m=5,\\
\alpha^{\frac13}=\frac12 & \text{if}~m=6,\\
\alpha^{\frac13}<\frac12 & \text{if}~m\geqslant 7.
\end{cases}
\end{equation}

We now prove that $B_m^L(1)$ is consistently $\alpha$-normal. Label edges
and vertices of $B_m^L(1)$ as shown in Figure \ref{labeled B-m-L(1)}.

\begin{figure}
\begin{minipage}{0.5\textwidth}
\centering
\includegraphics[scale=0.7]{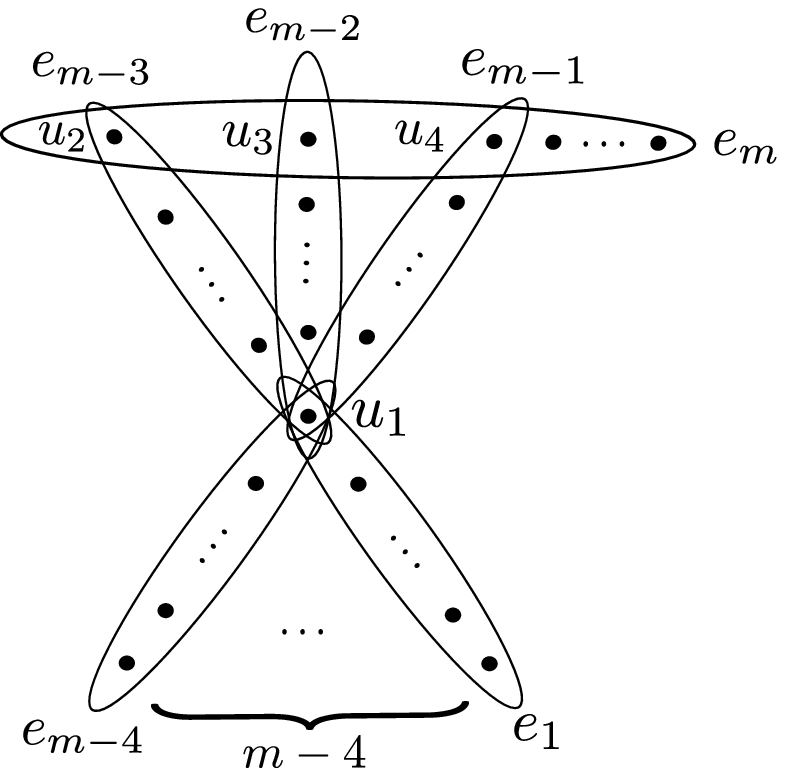}
\caption{$B_m^L(1)$}
\label{labeled B-m-L(1)}
\end{minipage}
\begin{minipage}{0.5\textwidth}
\centering
\includegraphics[scale=0.75]{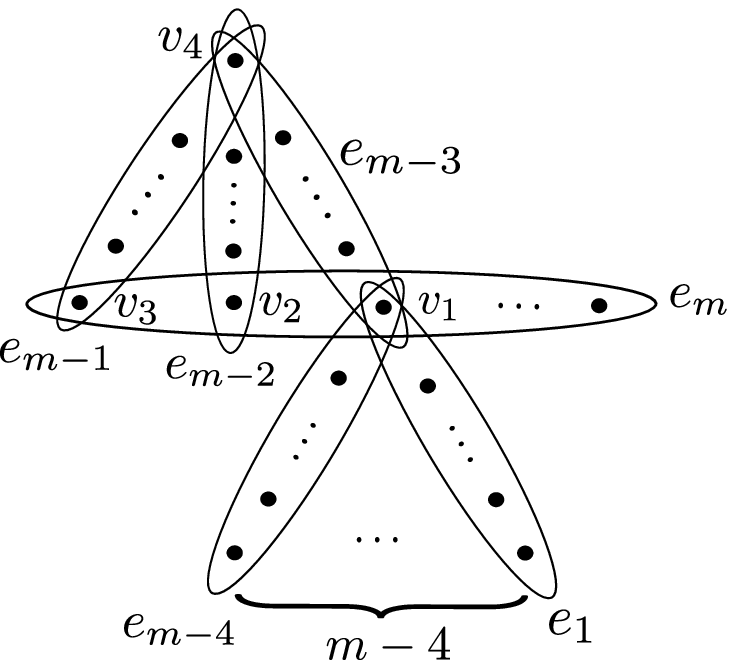}
\caption{$B_m^L(2)$}
\label{labeled B-m-L(2)}
\end{minipage}
\end{figure}

Construct a weighted incidence matrix $B(u,e)$ of $B_m^L(1)$ as follows.
\[
B(u,e)=
\begin{cases}
0 & u\notin e,\\
1 & u~\text{is a pendent vertex in}~e,\\
\alpha & (u,e)=(u_1,e_i), i=1,2,\cdots,m-4,\\[2mm]
\displaystyle\frac{\alpha}{1-\alpha^{\frac13}} & (u,e)=(u_1,e_i),i=m-3,m-2,m-1,\\[3mm]
\displaystyle 1-\alpha^{\frac13} & (u,e)=(u_2,e_{m-3}),(u_3,e_{m-2}),(u_4,e_{m-1}),\\[2mm]
\displaystyle\alpha^{\frac13} & (u,e)=(u_i,e_m),i=2,3,4.
\end{cases}
\]
It can be checked that
\[
\sum_{e:u\in e}B(u,e)=1,~~\text{for any}~u\in V(B_m^L(1)),
\]
\[
\prod_{u:u\in e}B(u,e)=\alpha,~~\text{for any}~e\in E(B_m^L(1)),
\]
which yields that $B_m^L(1)$ is $\alpha$-normal.
For cycle $u_1e_{m-3}u_2e_mu_3e_{m-2}u_1$, we have
\[
\frac{B(u_2,e_{m-3})}{B(u_1,e_{m-3})}\cdot
\frac{B(u_3,e_{m})}{B(u_2,e_{m})}\cdot
\frac{B(u_1,e_{m-2})}{B(u_3,e_{m-2})}=
\frac{1-\alpha^{\frac13}}{\frac{\alpha}{1-\alpha^{\frac13}}}\cdot
\frac{\alpha^{\frac13}}{\alpha^{\frac13}}\cdot
\frac{\frac{\alpha}{1-\alpha^{\frac13}}}{1-\alpha^{\frac13}}=1.
\]
Similarly, for cycles $u_1e_{m-3}u_2e_mu_4e_{m-1}u_1$ and $u_1e_{m-2}u_3e_mu_4e_{m-1}u_1$,
we have the same results. Hence $B_m^L(1)$ is consistently $\alpha$-normal, and
therefore $\rho(B_m^L(1))=\alpha^{-\frac1k}$ from (1) of Lemma \ref{lem:StrictlySubnormal}.

Now we consider the hypergraph $B_m^L(2)$. Label the edges and vertices of $B_m^L(2)$
as shown in Figure \ref{labeled B-m-L(2)}. We distinguish two cases as follows.\\
{\bf Case 1:} $m=5$.\\
In this case, \eqref{eq:alpha} can be written as
\begin{equation}
\label{eq:alpha m=5}
\alpha^{\frac43}-4\alpha-\alpha^{\frac13}+1=0.
\end{equation}
Construct a weighted incidence matrix $B(v,e)$ of $B_m^L(2)$ as follows.
\[
B(v,e)=
\begin{cases}
0 & v\notin e,\\
1 & v~\text{is a pendent vertex in}~e,\\
\displaystyle \alpha & (v,e)=(v_1,e_1),\\[2mm]
\displaystyle 1-\alpha-\alpha^{\frac13} & (v,e)=(v_1,e_{2}),\\[2mm]
\displaystyle \alpha^{\frac13} & (v,e)=(v_i,e_5),i=1,2,3,\\[1mm]
\displaystyle 1-\alpha^{\frac13} & (v,e)=(v_2,e_3),(v_3,e_4),\\[2mm]
\displaystyle \frac{\alpha}{1-\alpha^{\frac13}} & (v,e)=(v_4,e_i),i=3,4,\\[3mm]
\displaystyle 1-\frac{2\alpha}{1-\alpha^{\frac13}} & (v,e)=(v_4,e_2).
\end{cases}
\]
It can be checked that
\begin{align*}
\sum_{e:v\in e}B(v,e)=1, &~~\text{for any}~v\in V(B_m^L(2)),\\[2mm]
\prod_{v:v\in e}B(v,e)=\alpha, &~~\text{for any}~e\neq e_{2}.
\end{align*}
For the edge $e_2$. It follows from \eqref{eq:alpha>=<1/2} and \eqref{eq:alpha m=5} that
\begin{align*}
\prod_{v:v\in e_{2}}B(v,e_{2})
& =\left(1-\frac{2\alpha}{1-\alpha^{\frac13}}\right)\cdot(1-\alpha-\alpha^{\frac13})\\
& =\frac{[(1-\alpha^{\frac13})-2\alpha]\cdot[(1-\alpha^{\frac13})-\alpha]}{1-\alpha^{\frac13}}\\
& =\frac{[(4\alpha-\alpha^{\frac43})-2\alpha]\cdot[(4\alpha-\alpha^{\frac43})-\alpha]}{4\alpha-\alpha^{\frac43}}\\
& =\alpha+\frac{\alpha(\alpha^{\frac23}-4\alpha^{\frac13}+2)}{4-\alpha^{\frac13}}\\
&>\alpha,
\end{align*}
which yields that $B_m^L(2)$ is strictly $\alpha$-subnormal,
and then by (2) of Lemma \ref{lem:StrictlySubnormal} we have
$\rho(B_m^L(2))<\alpha^{-\frac1k}=\rho(B_m^L(1))$.\\
{\bf Case 2:} $m\geqslant 6$.\\
We construct a weighted incidence matrix $B(v,e)$ for the hypergraph $B_m^L(2)$ as follows.
\[
B(v,e)=
\begin{cases}
0 & v\notin e,\\
1 & v~\text{is a pendent vertex in}~e,\\
\displaystyle \alpha & (v,e)=(v_1,e_i), i=1,2,\cdots,m-4,\\[3mm]
\displaystyle \frac{\alpha}{1-\alpha^{\frac13}} & (v,e)=(v_1,e_{m-3}),\\[3mm]
\displaystyle \frac{2\alpha}{1-\alpha^{\frac13}} & (v,e)=(v_1,e_m),\\[3mm]
\displaystyle 1-2\alpha^{\frac23} & (v,e)=(v_i,e_m), i=2,3,\\[2mm]
\displaystyle 2\alpha^{\frac23} & (v,e)=(v_2,e_{m-2}),(v_3,e_{m-1}),\\[1mm]
\displaystyle \frac{\alpha^{\frac13}}{2} & (v,e)=(v_4,e_i),i=m-2,m-1,\\[3mm]
\displaystyle 1-\alpha^{\frac13} & (v,e)=(v_4,e_{m-3}).
\end{cases}
\]
It can be checked that
\begin{align*}
\sum_{e:v\in e}B(v,e)=1, & ~~\text{for any}~v\in V(B_m^L(2)),\\
\prod_{v:v\in e}B(v,e)=\alpha, & ~~\text{for any}~e\neq e_{m}.
\end{align*}
For the edge $e_{m}$, we have
\begin{align*}
\prod_{v:v\in e_{m}}B(v,e_{m}) & =\frac{2\alpha}{1-\alpha^{\frac13}}\cdot
\left(1-2\alpha^{\frac23}\right)^2\\
& =\frac{2\alpha\left(4\alpha^{\frac43}-4\alpha^{\frac23}+1\right)}{1-\alpha^{\frac13}}\\
& =\alpha+\frac{\alpha(2\alpha^{\frac13}-1)(\alpha^{\frac13}+1)(4\alpha^{\frac23}-2\alpha^{\frac13}-1)}
{1-\alpha^{\frac13}}.
\end{align*}

(i). If $m=6$, then $\alpha^{\frac13}=\frac12$. Therefore
\[
\prod_{v:v\in e_{m}}B(v,e_{m})=\alpha,
\]
which implies that $B_m^L(2)$ is $\alpha$-normal. Then $B_m^L(2)$ is $\alpha$-subnormal and
by (2) of Lemma \ref{lem:StrictlySubnormal} we have $\rho(B_m^L(2))\leqslant \alpha^{-\frac1k}$.
Now we will show $\rho(B_m^L(2))\neq \alpha^{-\frac1k}$. For cycle $v_1e_mv_2e_{m-2}v_4e_{m-3}v_1$,
we have
\[
\frac{B(v_2,e_m)}{B(v_1,e_m)}\cdot
\frac{B(v_4,e_{m-2})}{B(v_2,e_{m-2})}\cdot
\frac{B(v_1,e_{m-3})}{B(v_4,e_{m-3})}=
\frac{1-2\alpha^{\frac23}}{8\alpha^{\frac13}(1-\alpha^{\frac13})}\neq 1.
\]
Therefore matrix $B(v,e)$ is not consistent. Then $B_m^L(2)$ is not
consistently $\alpha$-normal, by (1) of Lemma \ref{lem:StrictlySubnormal}
we know that $\rho(B_m^L(2))\neq \alpha^{-\frac1k}$. It follows that
$\rho(B_m^L(2))<\alpha^{-\frac1k}=\rho(B_m^L(1))$.

(ii). If $m\geqslant 7$, then $\alpha^{\frac13}<\frac12$ from \eqref{eq:alpha>=<1/2}. Therefore
\[
2\alpha^{\frac13}-1<0,~4\alpha^{\frac23}-2\alpha^{\frac13}-1<0.
\]
It follows that
\[
\prod_{v:v\in e_{m}}B(v,e_{m})>\alpha.
\]
That is, $B_m^L(2)$ is strictly $\alpha$-subnormal,
and therefore by (2) of Lemma \ref{lem:StrictlySubnormal} we have
$\rho(B_m^L(2))<\alpha^{-\frac1k}=\rho(B_m^L(1))$.

The proof is completed.
\end{proof}

It is convenient to prove the second part of Conjecture \ref{conjecture} by
using the following expression of spectral radius of a nonegative symmetric tensor.
\begin{theorem}
[\cite{Qi2013}]
\label{relaigh}
Let $\mathcal{T}$ be a nonnegative symmetric tensor of order $k$ and dimension $n$,
denote $\mathbb{R}_{+}^{n}=\{x\in\mathbb{R}^{n}\,|\, x\geqslant 0\}$. Then we have
\begin{equation}
\label{Expression for spectral radius}
\rho(\mathcal{T})=\max\{x^{T}(\mathcal{T}x)\,|\, x\in\mathbb{R}_{+}^{n},
\sum_{i=1}^{n}x_{i}^{k}=1\}.
\end{equation}
Furthermore, $x\in\mathbb{R}_{+}^{n}$ with $\sum_{i=1}^{n}x_{i}^{k}=1$ is an
optimal solution of the above optimization problem if and only
if it is an eigenvector of $\mathcal{T}$ corresponding to the eigenvalue
$\rho(\mathcal{T})$.
\end{theorem}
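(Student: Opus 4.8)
The plan is to realize $\rho(\mathcal{T})$ as the optimal value of the stated problem by combining the Perron--Frobenius theorem (Theorem~\ref{Perron-Frobenius}) with a Lagrange/KKT analysis of the maximizer. Write $f(x)=x^{T}(\mathcal{T}x)=\sum_{i_1,\dots,i_k}\mathcal{T}_{i_1\cdots i_k}x_{i_1}\cdots x_{i_k}$ for the objective and set $S=\{x\in\mathbb{R}_{+}^{n}:\sum_{i=1}^{n}x_i^{k}=1\}$. Since $S$ is compact and $f$ is continuous, the maximum $\eta:=\max_{x\in S}f(x)$ is attained at some $x^{*}\in S$; the goal is to show $\eta=\rho(\mathcal{T})$ and to identify the maximizers.

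First I would prove $\eta\geqslant\rho(\mathcal{T})$. By part~(1) of Theorem~\ref{Perron-Frobenius}, $\rho(\mathcal{T})$ is an eigenvalue with a nonnegative eigenvector $x$, which after scaling may be taken in $S$. Then $f(x)=x^{T}(\mathcal{T}x)=x^{T}\bigl(\rho(\mathcal{T})\,x^{[k-1]}\bigr)=\rho(\mathcal{T})\sum_{i}x_i^{k}=\rho(\mathcal{T})$, whence $\eta\geqslant\rho(\mathcal{T})$.

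The reverse inequality is the crux. The key computation, which uses that $\mathcal{T}$ is \emph{symmetric}, is that $\nabla f(x)=k\,\mathcal{T}x$ while $\nabla\bigl(\sum_i x_i^{k}\bigr)=k\,x^{[k-1]}$. At the maximizer $x^{*}$ I would invoke the Karush--Kuhn--Tucker conditions for maximizing $f$ over $S$, taking the equality constraint $\sum_i x_i^{k}=1$ together with the sign constraints $x_i\geqslant0$; the constraint qualification holds because $x^{*}\neq0$ forces $\nabla\bigl(\sum_i x_i^{k}\bigr)\neq0$, and the active gradients $-e_i$ at coordinates with $x_i^{*}=0$ are independent of it. This yields a multiplier $\lambda$ and $\mu\geqslant0$ with $\mathcal{T}x^{*}=\lambda\,(x^{*})^{[k-1]}-\tfrac{1}{k}\mu$ and $\mu_i x_i^{*}=0$. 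For coordinates with $x_i^{*}>0$, complementary slackness gives $(\mathcal{T}x^{*})_i=\lambda\,(x_i^{*})^{k-1}$. The main obstacle is the boundary: at a coordinate with $x_i^{*}=0$ the relation only gives $(\mathcal{T}x^{*})_i=-\mu_i/k\leqslant0$, but here the nonnegativity of $\mathcal{T}$ and of $x^{*}$ forces $(\mathcal{T}x^{*})_i\geqslant0$, so both sides vanish and $\mu_i=0$. Hence $\mathcal{T}x^{*}=\lambda\,(x^{*})^{[k-1]}$ on every coordinate, i.e. $x^{*}$ is a nonnegative eigenvector, and contracting with $x^{*}$ gives $\lambda=f(x^{*})=\eta\geqslant0$. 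Since $\eta$ is thus the modulus of an eigenvalue, $\eta\leqslant\rho(\mathcal{T})$, and combined with the previous step $\eta=\rho(\mathcal{T})$, which is \eqref{Expression for spectral radius}.

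Finally, for the characterization of optimizers: the ($\Rightarrow$) direction is exactly the KKT argument above applied to an arbitrary optimal $x$, since a global maximizer is in particular a local one and therefore satisfies $\mathcal{T}x=\eta\,x^{[k-1]}$ with $\eta=\rho(\mathcal{T})$; the ($\Leftarrow$) direction is the one-line check that if $x\in S$ satisfies $\mathcal{T}x=\rho(\mathcal{T})\,x^{[k-1]}$ then $f(x)=x^{T}(\mathcal{T}x)=\rho(\mathcal{T})\sum_i x_i^{k}=\rho(\mathcal{T})=\eta$, so $x$ is optimal.
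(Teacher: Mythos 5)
Your argument is correct: the inequality $\eta\geqslant\rho(\mathcal{T})$ via Perron--Frobenius, the use of symmetry to get $\nabla f=k\,\mathcal{T}x$, the KKT conditions at a maximizer, and in particular the key observation that nonnegativity of $\mathcal{T}$ and $x^{*}$ forces $(\mathcal{T}x^{*})_i=0$ at boundary coordinates (so the eigenequation holds everywhere) are all sound. Note, however, that the paper itself offers no proof of this statement --- it is imported verbatim from the cited reference \cite{Qi2013} --- so there is no in-paper argument to compare against; your proof is essentially the standard variational argument found in that reference.
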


\begin{theorem}
For $m\geqslant5$ we have $\rho\left(B_{m}^{L}(2)\right) >\rho\left(B_{m}^{P}\right)$.
\end{theorem}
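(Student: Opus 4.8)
The plan is to read off $\rho\!\left(B_{m}^{L}(2)\right)$ from the variational formula in Theorem \ref{relaigh}. Since $\mathcal{A}\!\left(B_{m}^{L}(2)\right)$ is nonnegative and symmetric,
\[
\rho\!\left(B_{m}^{L}(2)\right)=\max\Big\{\,y^{T}\big(\mathcal{A}(B_{m}^{L}(2))\,y\big):y\in\mathbb{R}_{+}^{n},\ \textstyle\sum_{i=1}^{n}y_{i}^{k}=1\,\Big\},
\]
so it suffices to produce a single nonnegative unit vector $y$ on $V(B_{m}^{L}(2))$ with $y^{T}(\mathcal{A}(B_{m}^{L}(2))\,y)>\rho(B_{m}^{P})$, after which $\rho(B_{m}^{L}(2))\geqslant y^{T}(\mathcal{A}(B_{m}^{L}(2))\,y)>\rho(B_{m}^{P})$ closes the argument. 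The engine of the computation is the identity $x^{T}(\mathcal{A}(G)\,x)=k\sum_{e\in E(G)}\prod_{v\in e}x_{v}$, valid for every $k$-uniform hypergraph $G$, which turns the Rayleigh quotient into a sum of edge-products.

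First I would fix the principal eigenvector $x$ of $B_{m}^{P}$; it exists, is positive, and satisfies $\sum_{i}x_{i}^{k}=1$ and $x^{T}(\mathcal{A}(B_{m}^{P})\,x)=\rho(B_{m}^{P})$ by Theorem \ref{Perron-Frobenius}, since $B_{m}^{P}$ is connected and hence weakly irreducible. Because $B_{m}^{P}$ and $B_{m}^{L}(2)$ are both linear bicyclic with $m$ edges, they have the same number of vertices $n=m(k-1)-1$ and share the same ``pendant part'' (the $m-4$ pendant edges at the high-degree vertex, and the pendant vertices sitting inside every edge). The plan is to choose a bijection $\sigma:V(B_{m}^{P})\to V(B_{m}^{L}(2))$ that is the identity on this common part and realises the difference between the two bicyclic cores, and to set $y_{\sigma(v)}=x_{v}$. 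Then $\sum_{i}y_{i}^{k}=\sum_{i}x_{i}^{k}=1$ \emph{automatically}, so no renormalisation is needed and the entire problem reduces to showing that the edge-product sum strictly increases. Before computing, I would record the forced symmetries of $x$: pendant vertices lying in a common edge have equal entries, and core vertices interchanged by an automorphism of $B_{m}^{P}$ have equal entries, which cuts the free coordinates down to a handful.

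The decisive step is the sign of
\[
\sum_{e\in E(B_{m}^{L}(2))}\prod_{v\in e}y_{v}-\sum_{e\in E(B_{m}^{P})}\prod_{v\in e}x_{v}.
\]
All edges outside the two cores are matched by $\sigma$ and contribute identical products, so this difference collapses to an explicit expression in the few core coordinates of $x$ (the entries at the analogues of $v_{1},v_{2},v_{3},v_{4}$). To sign it I would substitute the eigenequations $\rho(B_{m}^{P})\,x_{v}^{k-1}=(\mathcal{A}(B_{m}^{P})x)_{v}$ at the core vertices, which pin down the ratios between adjacent core entries, together with the fact that the vertex supporting the pendant edges carries the largest entry. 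Intuitively $B_{m}^{L}(2)$ is obtained from $B_{m}^{P}$ by an edge-shift that moves a core edge toward this heaviest vertex, and monotonicity of the product under such a shift, combined with the strict positivity of $x$, should force the difference to be positive.

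The main obstacle I expect is exactly this local sign computation: arranging $\sigma$ so that the transplantation is degree-consistent (so $y$ is a legitimate vector on $B_{m}^{L}(2)$ with the same norm) while simultaneously guaranteeing strict positivity of the core product difference for \emph{every} $m\geqslant 5$. If the single transplantation fails to be monotone at the smallest value, I would split off $m=5$ and dispose of it by a direct estimate, just as the preceding theorem handled $m=5$ separately; for $m\geqslant 6$ the eigenequation ratios should make the inequality routine.
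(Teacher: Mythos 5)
Your overall strategy coincides with the paper's: evaluate the Rayleigh quotient of $\mathcal{A}(B_m^L(2))$ (via Theorem \ref{relaigh}) at the unit principal eigenvector $x$ of $B_m^P$ and show that the sum of edge products strictly increases. But the proposal stops at the one step that actually carries the proof, and the mechanism you guess for that step is not the right one. First, no bijection $\sigma$ is needed: the two hypergraphs can be realized on the same vertex set, with
\[
B_m^L(2) \;=\; B_m^P - e_1 - e_2 + e_1' + e_2',
\]
where, writing the two non-pendent edges of $B_m^P$ through the high-degree vertex $v$ as $e_1=\{v,a_1,\dots,a_{k-2},a\}$ and $e_2=\{v,b_1,\dots,b_{k-2},b\}$, one takes $e_1'=\{v,a,b,a_2,\dots,a_{k-2}\}$ and $e_2'=\{v,a_1,b_1,\dots,b_{k-2}\}$. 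Using the symmetries $x_a=x_b$ and $x_{a_1}=\cdots=x_{a_{k-2}}=x_{b_1}=\cdots=x_{b_{k-2}}$, the difference of the two edge sums collapses to
\[
x_v\,(x_{a_1})^{k-3}\,(x_a-x_{a_1})^2 .
\]

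Here is the gap: this is a perfect square times a positive factor, so the strict positivity of $x$ that you invoke only yields $\rho(B_m^L(2))\geqslant\rho(B_m^P)$; if $x_a=x_{a_1}$ the difference vanishes. The missing ingredient is the strict inequality $x_a>x_{a_1}$, and it does not come from a general ``monotonicity under an edge shift toward the heaviest vertex'' principle but from comparing eigenequations at $a$ and $a_1$: the vertex $a_1$ lies only in $e_1$, so $\rho\, x_{a_1}^{k-1}=(x_{a_1})^{k-3}x_a x_v$, whereas $a$ has degree two, so $\rho\, x_a^{k-1}>(x_{a_1})^{k-2}x_v$; multiplying the first identity by $x_{a_1}$ and substituting the second gives $\rho (x_{a_1})^{k}<\rho (x_a)^{k}$, hence $x_a>x_{a_1}$. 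With this in place the argument is uniform in $m\geqslant 5$, so the case split at $m=5$ that you anticipate is unnecessary. Until you supply $x_a>x_{a_1}$ (or some substitute for it), the proposal proves only the non-strict inequality.
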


\begin{proof}
Let $x$ be the unit (positive) principal eigenvector of $\mathcal{A}(B_{m}^{P})$,
namely, $\rho\left(B_{m}^{P}\right)=x^{T}\left( \mathcal{A}(B_{m}^{P})x\right)$.
We label some vertices of $B_{m}^{P}$ as shown in Figure \ref{Fig:labeled B-m-P}.
\begin{figure}
\centering
\includegraphics[scale=0.7]{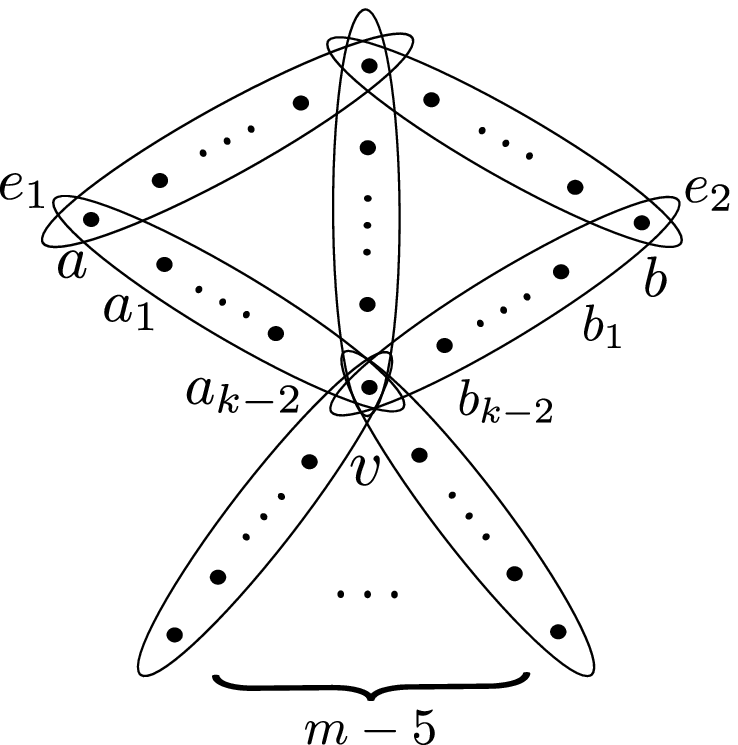}
\caption{$B_m^P$}
\label{Fig:labeled B-m-P}
\end{figure}

By the symmetry of the vertices $a_{1}$, $a_{2}$, $\cdots$, $a_{k-2}$,
we have $x_{a_{1}}=\cdots=x_{a_{k-2}}$. Similarly we have
$x_{b_{1}}=\cdots=x_{b_{k-2}}$, $x_{a}=x_{b}$ and $x_{a_{1}}=x_{b_{1}}$.

We will show  that $x_{a}>x_{a_{1}}$. Write $\rho=\rho(B_{m}^{P})$ for short,
then we have $\rho>(\Delta(B_m^P))^{\frac1k}>0$ (see \cite{Cooper:Spectra Uniform Hypergraphs}),
where $\Delta(B_m^P)$ is the maximum degree of $B_m^P$. By (\ref{Eigenequations}) we have
\[
\begin{cases}
\displaystyle \rho x_{a_{1}}^{k-1}=(x_{a_{1}})^{k-3}x_{a}x_{v},\\[2mm]
\displaystyle \rho x_{a}^{k-1}>(x_{a_{1}})^{k-2}x_{v}.
\end{cases}
\]
Thus $\rho (x_{a_{1}})^{k}=(x_{a_{1}})^{k-2}x_{a}x_{v}<\rho (x_{a})^{k}$,
and then $x_{a}>x_{a_1}$ holding.

Set
\begin{align*}
e_{1}  &  =\left\{ v, a_{1},\cdots, a_{k-2}, a \right\},
\quad~e_{2}=\left\{v, b_{1}, \cdots, b_{k-2},b\right\},\\
e_{1}^{\prime} & =\left\{v,a,b,a_{2},\cdots,a_{k-2}\right\},~~e_{2}^{\prime}
=\left\{v,a_{1},b_{1}, \cdots, b_{k-2} \right\}.
\end{align*}
(If $k=3$, we set $e_{1}^{\prime} =\left\{ v,a,b\right\}$ and
$e_{2}^{\prime}=\left\{v,a_{1},b_{1}\right\}$).
It is obvious that
\[
B_{m}^{L}(2)=B_{m}^{P}-e_{1}- e_{2}+e_{1}^{\prime}+e_{2}^{\prime}.
\]
By Theorem \ref{relaigh} we have
$\rho\left(B_{m}^{L}(2)\right)\geqslant x^{T}\left( \mathcal{A}(B_{m}^{L}(2))x\right)$.
Hence
\begin{align*}
\rho\left(B_{m}^{L}(2)\right) -\rho\left(B_{m}^{P}\right)
&  \geqslant x^{T}\left(\mathcal{A}(B_{m}^{L}(2))x\right)-
x^{T}\left( \mathcal{A}(B_{m}^{P})x\right) \\
&  =x_{v}x_{a}x_{b}(x_{a_{1}})^{k-3}+x_{v}x_{a_{1}}(x_{b_{1}})^{k-2}
-x_{v}(x_{a_{1}})^{k-2}x_{a}-x_{v}(x_{b_{1}})^{k-2}x_{b}\\
& =x_{v}(x_{a_{1}})^{k-3}\left(x_a-x_{a_1}\right)^{2}\\
& >0.
\end{align*}
Thus we prove that $\rho\left(B_{m}^{L}(2)\right) >\rho\left(B_{m}^{P}\right)$.
\end{proof}

\section{An eigenvalues property of generalized hypergraph $G^{k,s}$}

The concept of power hypergraphs was introduced in
\cite{ShenglongHu.etc:Cored Hypergraphs}.

\begin{definition}
[\cite{ShenglongHu.etc:Cored Hypergraphs}]
\label{power graph}
Let $G=(V(G),E(G))$ be an ordinary graph. For every $k\geqslant2$, the $k$-th power
of $G$, $G^{k}:=(V(G^{k}),E(G^{k}))$ is defined as the $k$-uniform hypergraph with
the edge set
\[
E(G^{k}):=\{e\cup\{i_{e,1},\cdot\cdot\cdot,i_{e,k-2}\}\,|\,e\in E(G)\}
\]
and the vertex set
\[
V(G^{k}):=V(G)\cup(\cup_{e\in E(G)}\{i_{e,1},\cdots,i_{e,k-2}\}).
\]
\end{definition}

The definition for $k$-th power hypergraph $G^{k}$ has been generalized by
Khan and Fan in \cite{KhanFan2015}.

\begin{definition}
[\cite{KhanFan2015}]
\label{generalized power hypergraph}
Let $G=(V,E)$ be an ordinary graph. For any $k\geqslant3$ and $1\leqslant s\leqslant\frac{k}{2}$.
For each $v\in V$ (and $e\in E$), let $V_{v}$ (and $V_{e}$) be a new vertex set with $s$ (and $k-2s$)
elements such that all these new sets are pairwise disjoint. Then the generalized power of
$G$, denoted by $G^{k,s}$, is defined as the $k$-uniform hypergraph with the vertex set
\[
V(G^{k,s})=\left( \bigcup_{v\in V}V_{v}\right) \bigcup\left( \bigcup_{e\in E}V_{e}\right)
\]
and edge set
\[
E(G^{k,s})=\{V_{u}\cup V_{v}\cup V_{e}:e=\{u,v\}\in E\}.
\]
\end{definition}

We may further generalize the definition of $G^{k,s}$ from a general graph $G$
to a uniform hypergraph $G$ as follows.

\begin{definition}
\label{generalized G-k,s}
Let $G=(V,E)$ be a $t$-uniform hyergraph. For any $k\geqslant t$ and
$1\leqslant s\leqslant \left\lfloor \frac{k}{t}\right\rfloor$. For each
$v\in V$ (and $e\in E$), let $V_{v}$ (and $V_{e}$) be a new vertex set with
$s$ (and $k-ts$) elements such that all these new sets are pairwise disjoint.
Then the generalized power of $G$, denoted by $G^{k,s}$, is defined as the
$k$-uniform hypergraph with the vertex set
\[
V(G^{k,s})=\left(\bigcup_{v\in V}V_{v}\right)\bigcup\left(\bigcup_{e\in E}V_{e}\right)
\]
and edge set
\[
E(G^{k,s})=\{V_{v_{1}}\cup\cdots\cup V_{v_{t}}\cup V_{e}:e=
\{v_{1},v_{2},\cdots,v_{t}\}\in E\}.
\]
\end{definition}

By constructing a new vector, we will prove a result related to the
relationship between $\rho(\mathcal{A}(G))$ and $\rho(\mathcal{A}(G^{k,s}))$.
Suppose that $x$ is an eigenvector of $\mathcal{A}(G)$ corresponding to $\mu$.
For any edge $e$, we will write
\[
x^{e}=\prod_{v\in e}x_{v},
\]
and
\[
x^{e\backslash\{u\}}=\prod_{v\in(e\backslash\{u\})}x_{v}.
\]
For any vertex $v$ of a $t$-uniform hypergraph $G$, from
\eqref{Formular for Ax} the corresponding eigenequation of
$\mathcal{A}(G)$ with eigenpair $(\mu,x)$ becomes

\begin{equation}
\left( \mathcal{A}(G)x\right)_{v}=\sum_{v\in e}
x^{e\backslash\{v\}}=\mu x_{v}^{t-1}.
\label{eigequations for adjacency tensor}
\end{equation}

\begin{theorem}
\label{thm:generalized power hypergraph}
If $\mu>0$ is an eigenvalue of $\mathcal{A}(G)$ with a nonegative eigenvector,
then $\mu^{\frac{ts}{k}}$ is an eigenvalue of $\mathcal{A}(G^{k,s})$.
Moreover $\rho(\mathcal{A}(G^{k,s}))=\rho(\mathcal{A}(G))^{\frac{ts}{k}}$
for connected hypergraph $G$.
\end{theorem}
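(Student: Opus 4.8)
The plan is to prove the first assertion by exhibiting an explicit nonnegative eigenvector of $\mathcal{A}(G^{k,s})$ built from the given eigenvector $x$ of $\mathcal{A}(G)$, and then to upgrade this to the spectral-radius identity via the Perron--Frobenius theorem. Writing $x\ge 0$ for the eigenvector of $\mathcal{A}(G)$ associated with $\mu>0$, so that $\sum_{e:v\in e}x^{e\backslash\{v\}}=\mu x_v^{t-1}$ for every $v\in V(G)$ by \eqref{eigequations for adjacency tensor}, I would define a vector $y$ on $V(G^{k,s})$ by the ansatz
\[
y_w=x_v^{t/k}\ \ (w\in V_v,\ v\in V(G)),\qquad y_w=\Big(\tfrac{x^e}{\mu}\Big)^{1/k}\ \ (w\in V_e,\ e\in E(G)).
\]
Since $x\ge 0$, $\mu>0$ and $k\ge t$, every entry is a well-defined nonnegative real and $y\neq 0$; the claim is that $\mathcal{A}(G^{k,s})y=\mu^{ts/k}\,y^{[k-1]}$.

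The verification splits according to the two classes of vertices of $G^{k,s}$. Each edge $E_e=V_{v_1}\cup\cdots\cup V_{v_t}\cup V_e$ corresponds to an edge $e=\{v_1,\dots,v_t\}$ of $G$ and contains exactly $s$ copies carrying each value $x_{v_i}^{t/k}$ together with $k-ts$ edge-vertices. For a vertex-copy $w\in V_{v_j}$, the edges of $G^{k,s}$ through $w$ are precisely the $E_e$ with $v_j\in e$, and a direct product computation (multiplying the $s-1$ remaining copies in $V_{v_j}$, the $s$ copies in each other $V_{v_i}$, and the $k-ts$ edge-vertices), after substituting the two defining values and using $x^e=x_{v_j}\,x^{e\backslash\{v_j\}}$, collapses to $y^{E_e\backslash\{w\}}=\mu^{-(k-ts)/k}\,x_{v_j}^{(k-t)/k}\,x^{e\backslash\{v_j\}}$. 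Summing over $e\ni v_j$ and invoking the $G$-eigenequation then gives exactly $\mu^{ts/k}\,y_w^{k-1}$. For an edge-vertex $w\in V_e$, which lies in the single edge $E_e$, the analogous one-line computation yields $y^{E_e\backslash\{w\}}=\mu^{ts/k}\,y_w^{k-1}$ as well. I would carry out both products multiplicatively (never dividing by $y_w$) so the argument is insensitive to the vanishing of some $x_v$ or $x^e$ and so the degenerate case $k=ts$ (no edge-vertices) needs no separate treatment. This establishes that $\mu^{ts/k}$ is an eigenvalue of $\mathcal{A}(G^{k,s})$ with nonnegative eigenvector $y$.

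For the ``moreover'' part I would specialize $\mu=\rho(\mathcal{A}(G))$: by Theorem \ref{Perron-Frobenius} this is an eigenvalue with a nonnegative eigenvector, and connectedness of $G$ (hence weak irreducibility, hence positivity of the Perron root) guarantees $\mu>0$, so the first part applies and produces $\rho(\mathcal{A}(G))^{ts/k}$ as an eigenvalue of $\mathcal{A}(G^{k,s})$ with a nonnegative eigenvector. Since $G$ is connected, $G^{k,s}$ is connected as well --- two edges $E_e,E_{e'}$ meet exactly when $e,e'$ meet in $G$ --- so $\mathcal{A}(G^{k,s})$ is weakly irreducible. By part (2) of Theorem \ref{Perron-Frobenius} the only eigenvalue admitting a nonnegative eigenvector is the spectral radius, whence $\rho(\mathcal{A}(G^{k,s}))=\rho(\mathcal{A}(G))^{ts/k}$.

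The main obstacle is pinning down the correct ansatz --- the exponents $t/k$ on vertex-copies and $1/k$ on edge-vertices, together with the $\mu^{-1/k}$ normalization --- which is forced by simultaneously matching the $x_v$-powers in the vertex-copy equation (this gives exponent $t/k$) and the $\mu$-powers in the edge-vertex equation (this fixes the eigenvalue at $\mu^{ts/k}$). Once the ansatz is in hand the remaining work is exponent bookkeeping; the only points demanding genuine care are performing the computations multiplicatively so as to cover entries that vanish, and justifying connectivity of $G^{k,s}$ so that the Perron--Frobenius uniqueness clause can be applied in the second part.
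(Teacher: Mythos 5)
Your proposal is correct and follows essentially the same route as the paper: the identical ansatz $y_w=x_v^{t/k}$ on vertex-copies and $y_w=(\mu^{-1}x^e)^{1/k}$ on edge-vertices, the same two-case verification of the eigenequation, and the same appeal to part (2) of Theorem \ref{Perron-Frobenius} for the spectral-radius identity. The only (welcome) additions are your explicit remarks on the connectivity of $G^{k,s}$ and on carrying out the computation multiplicatively to cover vanishing entries, which the paper leaves implicit.
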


\begin{proof}
Suppose that $x$ is a nonnegative eigenvector of the eigenvalue $\mu>0$ of
$\mathcal{A}(G)$. Now we construct a new vector $y$ (of dimension $|V(G^{k,s})|$)
from $x$ by adding components. Set
\[
y_{w}=
\begin{cases}
(x_{v})^{\frac{t}{k}} & \text{if }w\in V_{v}\text{ for some }v,\\
(\mu^{-1}x^{e})^{\frac{1}{k}} & \text{if }w\in V_{e}\text{ for some edge }e.
\end{cases}
\]
Now we will show $\mathcal{A}(G^{k,s})y=\mu^{\frac{ts}{k}}y^{[k-1]}$ holding.

For any $w\in V_{v}$ for some $v$, it follows from
(\ref{eigequations for adjacency tensor}) that
\begin{align}
(\mathcal{A}(G^{k,s})y)_{w}  &  =\sum_{v\in e\in E(G)}[(x_{v})^{\frac{t}{k}%
}]^{s-1}[(x^{e\setminus\{v\}})^{\frac{t}{k}}]^{s}[(\mu^{-1}x^{e})^{\frac{1}%
{k}}]^{k-ts}\nonumber\\
&  =\mu^{\frac{ts}{k}-1}(x_{v})^{\frac{k-t}{k}}\sum_{v\in e\in E(G)}%
x^{e\setminus\{v\}}\nonumber\\
&  =\mu^{\frac{ts}{k}-1}(x_{v})^{\frac{k-t}{k}}\mu(x_{v})^{t-1}\nonumber\\
&  =\mu^{\frac{ts}{k}}[(x_{v})^{\frac{t}{k}}]^{k-1}\nonumber\\
&  =\mu^{\frac{ts}{k}}y_{w}^{k-1}.\nonumber
\end{align}

For $w\in V_{e}$ for any edge $e$, we have
\begin{align}
(\mathcal{A}(G^{k,s})y)_{w}  &  =[(x^{e})^{\frac{t}{k}}]^{s}[(\mu^{-1}%
x^{e})^{\frac{1}{k}}]^{k-ts-1}\nonumber\\
&  =\mu^{\frac{ts}{k}}(\mu^{-1}x^{e})^{\frac{k-1}{k}}\nonumber\\
&  =\mu^{\frac{ts}{k}}y_{w}^{k-1}.\nonumber
\end{align}
Hence $\mu^{\frac{ts}{k}}$ is an eigenvalue of $\mathcal{A}(G^{k,s})$ with
eigenvector $y$.

For connected $t$-uniform hypergraph $G$, choose $x$ as a
positive eigenvector of $\rho(\mathcal{A}(G))$ by Perron-Frobenius theorem for
irreducible nonnegative tensor. In this case $y$ is a positive eigenvector of
the eigenvalue $\rho(\mathcal{A}(G))^{\frac{ts}{k}}$ of tensor $\mathcal{A}(G^{k,s})$.
In virtue of (2) of Theorem \ref{Perron-Frobenius} (or see Lemma 15 of
\cite{J.Zhou.etc:Spectral Properties Uniform Hypergraphs}), we have
\[
\rho(\mathcal{A}(G^{k,s}))=\rho(\mathcal{A}(G))^{\frac{ts}{k}}.
\]

The proof is completed.
\end{proof}

\begin{remark}
The result of Theorem \ref{thm:generalized power hypergraph} generalizes some
known cases as follows.
\begin{enumerate}
\item If we take $t=2$ and $s=1,$ then $G^{k,s}$ becomes the $k$-th power of a
general graph $G$ (see Definition \ref{power graph}), and the corresponding
version of Theorem \ref{thm:generalized power hypergraph} is Theorem 16 of
\cite{J.Zhou.etc:Spectral Properties Uniform Hypergraphs}.
	
\item If we take $t=2$ and $s=\frac{k}{2}$, and the corresponding version of
Theorem \ref{thm:generalized power hypergraph} is the adjacency tensor part
of Lemma 3.12 of \cite{KhanFan2015}.

\item  If we take $t=k-1$ and $s=1$, and the corresponding version of Theorem
\ref{thm:generalized power hypergraph} is a similar result of Lemma 8 in
\cite{LuAndMan:Small Spectral Radius}.

\item If we take $t=2$, then $G^{k,s}$ becomes the generalized power hypergraph
of a general graph (see Definition \ref{generalized power hypergraph}), and
the corresponding version of Theorem \ref{thm:generalized power hypergraph} is
Theorem 21 of \cite{XiyingYuan:conjecture of Qi}.
\end{enumerate}
\end{remark}

From the proof of Theorem \ref{thm:generalized power hypergraph} we obtain the following result.

\begin{lemma}
\label{eigenpair of G-k,s copy(1)}
Let $G$ be a connected $t$-uniform hypergraph with spectral radius $\rho(G)$ and principal
eigenvector $y$. Then $G^{k,s}$ has spectral radius $\rho(G)^{\frac{ts}{k}}$ with an
eigenvector $x$ such that
\[
x_{w}=
\begin{cases}
(y_{v})^{\frac{t}{k}} & \text{if }w\in V_{v}\text{ for some }v\text{ of }G,\\
\left(\frac{y^{e}}{\rho(G)}\right)^{\frac{1}{k}} & \text{if }w\in
V_{e}\text{ for some edge }e\text{ of }G.
\end{cases}
\]
\end{lemma}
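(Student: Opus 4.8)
The plan is to read the statement off directly from the vector construction carried out in the proof of Theorem \ref{thm:generalized power hypergraph}, specialized to the distinguished eigenpair consisting of the spectral radius of $G$ together with its principal eigenvector. The only genuine work is selecting the correct input eigenpair and matching the notation, since the eigenequation verification has already been done.

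First I would invoke Perron--Frobenius. Because $G$ is connected, $\mathcal{A}(G)$ is weakly irreducible, so by Theorem \ref{Perron-Frobenius} the value $\mu := \rho(G) = \rho(\mathcal{A}(G))$ is a positive eigenvalue possessing a positive principal eigenvector $y$. Hence the pair $(\mu, y)$ with $\mu > 0$ satisfies the hypotheses of Theorem \ref{thm:generalized power hypergraph}.

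Next I would apply the construction from that proof to the pair $(\mu, y)$. In the notation used there the input eigenvector of $G$ is denoted $x$ and the produced vector of $G^{k,s}$ is denoted $y$; in the present lemma these two names are interchanged, and one sets $\mu = \rho(G)$. With this substitution the constructed vector has components $(y_v)^{t/k}$ for $w \in V_v$ and $(\mu^{-1} y^e)^{1/k} = (y^e/\rho(G))^{1/k}$ for $w \in V_e$, which is precisely the $x$ displayed in the statement. The two component-wise computations already carried out in the proof of Theorem \ref{thm:generalized power hypergraph} (one for $w \in V_v$, one for $w \in V_e$) then verify $\mathcal{A}(G^{k,s})\, x = \mu^{ts/k}\, x^{[k-1]}$, so $x$ is an eigenvector of $\mathcal{A}(G^{k,s})$ with eigenvalue $\rho(G)^{ts/k}$; moreover $x$ is positive since $y$ is positive and $t/k, 1/k > 0$.

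Finally, the ``Moreover'' half of Theorem \ref{thm:generalized power hypergraph} already establishes $\rho(\mathcal{A}(G^{k,s})) = \rho(\mathcal{A}(G))^{ts/k} = \rho(G)^{ts/k}$, so the eigenvalue produced above is in fact the spectral radius of $G^{k,s}$, completing the identification. There is essentially no obstacle here: the lemma is a restatement of the already-verified construction for the specific input $\mu = \rho(G)$, and the one point demanding care is the swapped roles of $x$ and $y$ between the theorem and the lemma.
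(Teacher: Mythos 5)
Your proposal is correct and matches the paper exactly: the paper offers no separate proof, stating only that the lemma follows ``from the proof of Theorem \ref{thm:generalized power hypergraph}'' applied to the eigenpair $(\rho(G), y)$ guaranteed by Perron--Frobenius for the connected (hence weakly irreducible) case. Your careful note about the swapped roles of $x$ and $y$ between the theorem and the lemma is the only nontrivial point, and you handle it correctly.
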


\end{document}